\theoremstyle{plain}
\newtheorem{theorem}{Theorem}
\newtheorem{proposition}[theorem]{Proposition}
\newtheorem{conjecture}[theorem]{Conjecture}
\newcommand{\conjlabel}[1]{\label{con:#1}}
\newcommand{\conjref}[1]{Conjecture~\ref{con:#1}}    
\newcommand{\eqnlabel}[1]{\label{eqn:#1}}
\newcommand{\figlabel}[1]{\label{fig:#1}}
\newcommand{\figref}[1]{Figure~\ref{fig:#1}}
\newcommand{\seclabel}[1]{\label{sec:#1}}
\newcommand{\secref}[1]{Section~\ref{sec:#1}}
\newcommand{\proplabel}[1]{\label{prop:#1}}
\newcommand{\spacing}[1]{\renewcommand{\baselinestretch}{#1}\setlength{\footnotesep}{\baselinestretch\footnotesep}}
\newcommand{\V}{\mathcal{V}}
\newcommand{\Figure}[4][htb]{
\begin{figure}[#1]
	\vspace*{1ex}
	\begin{center}#3\end{center}
	\vspace*{-1ex}
	\caption{\figlabel{#2}#4}
\end{figure}
}
\newcommand{\half}{\ensuremath{\protect\tfrac{1}{2}}}
\newcommand{\ceil}[1]{\ensuremath{\protect\lceil#1\rceil}}
\newcommand{\floor}[1]{\ensuremath{\protect\lfloor#1\rfloor}}
\begin{document}

\title[On Visibility and Blockers]{On Visibility and Blockers}

\author[]{Attila P\'or}
\address{Department of Mathematics, 
  Western Kentucky University
\newline Bowling Green,  Kentucky, U.S.A.}
\email{attila.por@wku.edu}

\author[]{David~R.~Wood}
\address{Department of Mathematics and Statistics, 
 The University of Melbourne
\newline Melbourne, Australia}
\email{woodd@unimelb.edu.au}

\begin{abstract}
This expository paper discusses some conjectures related to visibility
and blockers for sets of points in the plane.
\end{abstract}

\subjclass[2000]{52C10 Erd\H os problems and related topics of discrete geometry, 05D10 Ramsey theory}

\maketitle

%\footnote{A \emph{$k$-colouring} of a graph $G=(V,E)$ is a function $f:V\rightarrow C$ for some set $C$ of $k$ \emph{colours}, such that $f(v)\ne f(w)$ for every edge $vw\in E$. We say $G$ is \emph{$k$-colourable}. The \emph{chromatic number} $\chi(G)$ is the minimum $k$ such that $G$ is $k$-colourable. }

%\comment{I propose we submit to Journal of Computational  Geometry (\url{http://www.jocg.org}). What do you think?}

%%%%%%%%%%%%%%%%%%%%%%%%%%%%%%%%%%%%%%%%%%%%%%%%%%%%%%%%%%%%%%
\section{Visibility Graphs}
\seclabel{VisGraphs}
%%%%%%%%%%%%%%%%%%%%%%%%%%%%%%%%%%%%%%%%%%%%%%%%%%%%%%%%%%%%%%

% \footnote{For integers $m\leq n$, let $[m,n]:=\{m,m+1,\dots,n\}$ and
%   $[n]:=[1,n]$. }.

Let $P$ be a finite set of points in the plane. Two distinct points
$v$ and $w$ in the plane are \emph{visible} with respect to $P$ if no
point in $P$ is in the open line segment $\overline{vw}$. The
\emph{visibility graph} $\V(P)$ of $P$ has vertex set $P$, where two
distinct points $v,w\in P$ are adjacent if and only if they are
visible with respect to $P$. So $\V(P)$ is obtained by drawing lines
through each pair of points in $P$, where two points are adjacent if they
are consecutive on a such a line. Visibility graphs have many
interesting properties. For example, they have diameter at most 2
(assuming $P$ is not collinear).  Consider the following
Ramsey-theoretic conjecture by \citet{KPW-DCG05}, which has recently
received considerable attention \citep{Luigi,Matousek09,EmptyPentagon}.

% Note that if $G$ is the visibility graph of some point set
% $P\subseteq\R^d$, then $G$ is the visibility graph of some
% projection\footnote{A random projection of $P$ to $\R^2$ is
%   occlusion-free with probability $1$.} of $P$ to $\R^2$. Thus
% without loss of generality it suffices to consider visibility graphs
% of point sets in the plane.

\begin{conjecture}[Big-Line-Big-Clique Conjecture \citep{KPW-DCG05}]
  \conjlabel{BigClique} For all positive integers $k$ and $\ell$ there is an
  integer $n$ such that for every finite set $P$ of at least $n$
  points in the plane:
  \begin{itemize}
  \item $P$ contains $\ell$ collinear points, or
  \item $P$ contains $k$ pairwise visible points (that is, $\V(P)$
    contains a $k$-clique).
  \end{itemize}
\end{conjecture}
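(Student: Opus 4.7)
The plan is to argue by induction on $k$, with $\ell$ fixed throughout. It is convenient to assume that $P$ contains no $\ell$ collinear points, so the collinear alternative is vacuous and the task reduces to exhibiting a $k$-clique in $\V(P)$. The base cases $k \le 3$ I would take as known: $k=2$ is trivial from the non-collinearity assumption, and for $k=3$ one already knows that a sufficiently large point set with bounded collinearity contains a visible triangle, which I would invoke as the start of the induction rather than reprove. As a sanity bound that is useful later, each point $p \in P$ has at least $(|P|-1)/(\ell-2)$ visible neighbours, since every line through $p$ carries at most $\ell-2$ other points of $P$ and each such line contributes at least one visible neighbour.

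For the inductive step, I would take $|P|$ sufficiently large relative to $n(k-1,\ell)$, apply the inductive hypothesis to extract a $(k-1)$-clique $K = \{p_1, \ldots, p_{k-1}\}$ in $\V(P)$, and then try to extend $K$ by a point $q \in P \setminus K$ visible to every $p_i$. Writing $S_i \subseteq P$ for the set of points not visible from $p_i$, the natural hope is to control $|S_i|$ via the no-$\ell$-collinear hypothesis and show $\bigcup_i S_i \ne P$, so that a legal $q$ exists outside the union. A second hope is to leverage the fact that the $p_i$ are pairwise visible to constrain the blockers that witness $q \in S_i$: any such blocker lies on a short line determined by $p_i$ and $q$, and if too many of these lines are forced through a single $p_i$ one should be able to produce $\ell$ collinear points.

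The hard part will be that these counting bounds on $|S_i|$ are in general far too weak: a single $p_i$ can have $|S_i|$ equal to a positive fraction of $|P|$, so the union of shadows can cover $P$ and block the extension. To get past this, I would look for the $(k-1)$-clique not arbitrarily but inside a combinatorially restricted substructure --- for instance, a dense sub-configuration produced by a Ramsey or density-increment argument, or a locally convex cluster supplied by an Erd\H{o}s--Szekeres-type result --- in which visibility between nearby points can be forced and the shadows are proportionally small. I expect the essential difficulty to surface already at $k = 5$, where the required structure is closely tied to the existence of empty convex pentagons; for $k$ beyond the small values presently within reach, my plan is a strategy rather than a completed argument.
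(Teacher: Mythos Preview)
This statement is a \emph{conjecture}, not a theorem: the paper does not prove it, and indeed explicitly notes that it is open already for $k=6$ or $\ell=4$. So there is no ``paper's own proof'' to compare against, and any complete proof would be a major new result.

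Your write-up is honest about this: you yourself flag that the induction step cannot be closed with the counting you set up, because the invisibility sets $S_i$ can each have size a positive fraction of $|P|$, so $\bigcup_i S_i$ may well cover $P\setminus K$. That is exactly the obstruction, and the fallback you sketch --- choosing the $(k-1)$-clique inside some ``combinatorially restricted substructure'' found by a Ramsey or Erd\H{o}s--Szekeres argument --- is not an argument but a wish. For $k\le 5$ the known proofs do go through empty-pentagon/Erd\H{o}s--Szekeres machinery, as you note, but no one knows how to push that to $k=6$, and nothing in your outline suggests a mechanism that would. So what you have is a reasonable description of why the naive induction fails and where the difficulty lies, but it is not a proof proposal in any meaningful sense; it is a restatement of the problem's known status.
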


\conjref{BigClique} is true for $k\leq 5$ or $\ell\leq3$
\citep{KPW-DCG05,Luigi,EmptyPentagon}, and is open for $k=6$ or
$\ell=4$.

% The following results are known about \conjref{BigClique}.  It is
% trivially true for $\ell\leq3$ and all $k$.  It is true for all
% $k\leq 4$ and all $\ell$ \citep{KPW-DCG05,DESW-CGTA}.  In fact, as
% illustrated in \figref{ThreeColourableVisGraphs}, there is a
% straightforward characterisation of all point sets that contain no
% four pairwise visible points
% \citep{KPW-DCG05,DESW-CGTA,Eppstein-SoCG07}.  \citet{Luigi} proved
% \conjref{BigClique} in the case that $k=5$ and $\ell=4$.
% \citet{EmptyPentagon} proved that every set of at least
% $\ES(\frac{(2\ell-1)^{\ell}-1}{2\ell-2} )$ points in the plane
% contains $\ell$ collinear points or an empty convex pentagon, where
% $\ES(k)$ is the minimum integer such that every set of at least
% $\ES(k)$ points in general position in the plane contains $k$ points
% in convex position \citep{ES61,TothValtr05}. This implies
% \conjref{BigClique} for $k=5$ and all $\ell$ (since the vertices of
% an empty convex polygon are pairwise visible).  Note that
% $n>(\ell-1)^{\log_2(k-1)}$ in \conjref{BigClique}, since
% $[\ell-1]^d$ has no $\ell$ collinear points and no $k$ pairwise
% visible points for $k=2^d+1$ \citep{KPW-DCG05}.
% \Figure{ThreeColourableVisGraphs}{\includegraphics{ThreeColourableVisGraphs}}{Point
%   sets with no four pairwise visible points correspond precisely to
%   the 3-colourable visibility graphs
%   \citep{KPW-DCG05,DESW-CGTA,Eppstein-SoCG07}.}

Note that the natural approach for attacking the Big-Line-Big-Clique
Conjecture using extremal graph theory fails. \citet{Turan41} proved
that every $n$-vertex graph with more edges than the Tur\'an graph
$T_{n,k}$ contains $K_{k+1}$ as a subgraph\footnote{ Let $T_{n,k}$ be
  the $k$-coloured graph with $n_i $ vertices in the $i$-th colour
  class, where two vertices are adjacent  if and only if they have
  distinct colours, and $n=\sum_in_i$ and $|n_i-n_j|\leq
  1$ for all $i,j\in[k]$.}. Thus the Big-Line-Big-Clique Conjecture would be
proved if every sufficiently large visibility graph with no $\ell$
collinear points has more edges than $T_{n,k-1}$.  However,
\citet{BGS-GM74} and \citet{FurediPalasti-PAMS84} constructed sets $P$
of $n$ points with no four collinear, such that $P$ determines
$\frac{n^2}{6}-O(n)$ lines each containing three points. Thus $\V(P)$
has $\frac{n^2}{3}+O(n)$ edges, which is less than the number of edges
in $T_{n,k-1}$ for all $k\geq5$ and large $n$. These examples show
that the number of edges in a visibility graph with no four collinear
points is not enough to necessarily imply the existance of a large
clique via Tur\'an's Theorem.

Consider the following weakening of \conjref{BigClique}, due to Jan
Kara [private communication, 2005].

\begin{conjecture}
  \conjlabel{BigChromaticNumber} 
For all integers $k\geq2$ and $\ell\geq1$ there is an integer $n$ such that
if $P$ is a finite set of at least $n$ points in the plane, and each point in $P$ is assigned one of $k-1$ colours, then:
 \begin{itemize}
  \item $P$ contains $\ell$ collinear points, or
  \item some pair of visible points in $P$ receive the same colour\\
    (that is, the visibility graph $\V(P)$ has chromatic number
    $\chi(\V(P))\geq k$).
  \end{itemize}
\end{conjecture}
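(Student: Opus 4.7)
The plan is to attack \conjref{BigChromaticNumber} by induction on $k + \ell$, using the proven cases of \conjref{BigClique} for the base and a blocker-descent argument for the inductive step. For $k \leq 5$, \conjref{BigClique} produces a $k$-clique in $\V(P)$, which immediately forces $\chi(\V(P)) \geq k$. For $\ell = 3$ the visibility graph is $K_n$, so $\chi(\V(P)) = n$. Henceforth take $k \geq 6$, $\ell \geq 4$, and suppose for contradiction that $P$ has $n$ points (with $n$ large), no $\ell$ collinear, and admits a \emph{proper} $(k-1)$-colouring of $\V(P)$.

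By pigeonhole some colour class $C$ has $|C| \geq n/(k-1)$. A short observation is that $C$ has no $\ell-1$ collinear points: a line $L$ through $\ell-1$ points of $C$ would satisfy $|P \cap L| = \ell - 1$ (by the no-$\ell$-collinear hypothesis), hence $P \cap L = C \cap L$, and consecutive points on $L$ would form a monochromatic edge of $\V(P)$. Applying an Erd\H{o}s--Szekeres theorem for point sets with bounded collinearity to $C$ then extracts a subset $S_0 \subseteq C$ of size $s_0 = s_0(k,\ell)$ in convex position.

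The heart of the plan is a geometric Ramsey descent. No pair in $S_0$ is visible in $\V(P)$ (they share a colour), so each pair has a blocker in $P \setminus C$. Record the colour of a chosen blocker for each pair: this is a $(k-2)$-edge-colouring of $K_{s_0}$, so by Ramsey's theorem some $S_0' \subseteq S_0$ of size $s_1$ has all pair-blockers of a common colour $c_2$. The set $S_1$ of these blockers is monochromatic (colour $c_2$), lies in $\mathrm{conv}(S_0) \setminus S_0$, and still satisfies the no-$\ell$-collinear hypothesis. Iterate on $S_1$. Because each new $S_i$ consists of points of $P$ not used at earlier levels and $P$ is finite, the descent must terminate --- and the only way it can terminate is that some monochromatic pair within some $S_i$ is actually visible in $\V(P)$, giving the contradiction.

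The main obstacles are quantitative and geometric. Quantitatively, the descent depth is a priori bounded only by $|P|$, and propagating the Erd\H{o}s--Szekeres and Ramsey losses through that many levels would demand a tower of height $|P|$ in $k$ --- circular. Restricting the descent to a depth that is a function of $k$ alone, for instance by forcing each level to use a colour not seen before, appears essential. Geometrically, the scheme depends on controlling the blocker choice: when $\ell = 4$ each pair-line carries exactly three points and the blocker is canonical, but for $\ell \geq 5$ a pair-line can carry several non-$C$ points with no canonical selection. Choosing blockers so that $S_1$ inherits the mutual-blocking structure needed to continue the descent is, in my view, the central technical difficulty; it may require a hypergraph Ramsey theorem on the full colour-pattern along each line, or an empty-polygon theorem adapted to point sets with bounded collinearity.
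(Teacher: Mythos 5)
First, note what you are up against: the statement you set out to prove is an \emph{open conjecture}. The paper attributes it to Jan K\'ara, gives no proof, and records only that \conjref{BigClique} implies it --- hence it is known exactly for $k\leq5$ or $\ell\leq3$, which are precisely your base cases --- while the case $k\geq6$, $\ell\geq4$ is open. The paper's only positive result about it is \propref{GeneralBlockersImpliesBigChromaticNumber}, a conditional reduction showing that \conjref{GeneralBlockers} (superlinear blocking numbers $b_\ell(n)$) would imply \conjref{BigChromaticNumber}; the mechanism there is the same observation you make correctly, namely that a colour class $C$ in a proper colouring of $\V(P)$ is blocked by $P\setminus C$ and inherits the bounded-collinearity condition (your argument that $C$ has at most $\ell-2$ collinear points is sound). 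So the paper's intended route is a counting reduction to blocking numbers, not a Ramsey descent, and your proposal cannot be ``the paper's proof'' because none exists.

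Judged as a proof attempt, your plan has a genuine gap that you yourself identify but do not close, and it is fatal as stated. The descent has no bounded depth: Ramsey's theorem homogenises the blocker colour at each level, but the new colour $c_{i+1}$ is only guaranteed to differ from $c_i$, so colours may recur (e.g.\ alternate between two colours indefinitely), and nothing confines the depth to a function of $k$. Since each level also requires a fresh Erd\H{o}s--Szekeres extraction (the blockers in $S_1$ lie on chords of $\mathrm{conv}(S_0')$ and need not be in convex position) followed by a Ramsey step, the required initial size is a tower whose height is the descent depth --- the circularity you concede. Moreover, your termination claim is false: the iteration does not have to end by exhibiting a visible monochromatic pair; it ends as soon as some $|S_i|$ drops below the threshold for the next extraction, which is what actually happens, because finiteness of $P$ bounds the depth without producing any contradiction. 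Each $S_i$ is, after all, just another monochromatic blocked set, and a proper $(k-1)$-colouring is entirely consistent with an arbitrarily long nested chain of such sets. Add to this the non-canonical blocker choice for $\ell\geq5$ that you flag, and the proposal is a reasonable research sketch of a known-to-be-hard problem, not a proof; to make progress along the paper's own lines you would instead need a superlinear lower bound on $b_\ell(n)$, i.e.\ \conjref{GeneralBlockers}, which is also open.
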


\conjref{BigClique} implies \conjref{BigChromaticNumber} since the
chromatic number of any graph containing a $k$-clique is at least
$k$. Thus \conjref{BigChromaticNumber} is true for $k\leq 5$ or
$\ell\leq3$. Consider a proper colouring of a visibility graph
$\V(P)$.  That is, visible points are coloured differently. 
In each colour class $C$, no two vertices are visible. So
the vertices not in $C$ `block' the lines of visibility amongst
vertices in $C$. This idea leads to the following definitions that
were independently introduced by \citet{Matousek09} amongst others.

A point $x$ in the plane \emph{blocks} two points $v$ and $w$ if
$x\in\overline{vw}$. Let $P$ be a finite set of points in the plane. A
set $B$ of points in the plane \emph{blocks} $P$ if $P\cap
B=\emptyset$ and for all distinct $v,w\in P$ there is a point in $B$
that blocks $v$ and $w$. That is, no two points in $P$ are visible
with respect to $P\cup B$, or alternatively, $P$ is an independent set
in $\V(P\cup B)$.

The purpose of this expository paper is to discuss some conjectures
related to blocking sets. We remark that in the last few years, a
number of researchers have started studying blocking sets around the
same time (see \citep{Matousek09,DPT09,Pinchasi} and the named
researchers therein). So we expect that some of the observations in
this paper have been independently discovered by others.

%%%%%%%%%%%%%%%%%%%%%%%%%%%%%%%%%%%%%%%%%%%%%%%%%%%%%%%%%%%%%%
\section{The Blocking Conjecture}
\seclabel{Blockers}
%%%%%%%%%%%%%%%%%%%%%%%%%%%%%%%%%%%%%%%%%%%%%%%%%%%%%%%%%%%%%%

If $P$ is a set of collinear points then there is a set of $|P|-1$
points that block $P$. At the other extreme, how small can a blocking
set be if $P$ is in general position (that is, no three points are
collinear)? Let $b(P)$ be the minimum size of a set of points that
block $P$. Let $b(n)$ be the minimum of $b(P)$, where $P$ is a set of $n$
points in general position in the plane. We conjecture that every set
of points in general position requires a super-linear number of
blockers. 

\begin{conjecture}
  \conjlabel{Blockers} $\frac{b(n)}{n}\rightarrow\infty$ as $n
  \rightarrow\infty$.
\end{conjecture}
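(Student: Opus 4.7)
My starting point is the double-counting argument already implicit in the problem. Since $P$ is in general position, every line meets $P$ in at most two points, so a blocker $x$ blocks a pair of $P$ only along a line $\ell$ through $x$ with $|\ell\cap P|=2$ and $x$ strictly between the two points; hence each $x\in B$ blocks at most $\lfloor n/2\rfloor$ pairs, and since all $\binom{n}{2}$ pairs of $P$ must be blocked one immediately gets $b(n)\geq n-1$. To upgrade this to $b(n)/n\to\infty$, I would reduce the problem to a rigidity statement on efficient blockers. Call $x\in B$ \emph{$\epsilon$-heavy} if $x$ blocks at least $\epsilon n$ pairs of $P$, and let $h_\epsilon(B)$ be the number of $\epsilon$-heavy blockers in a minimum blocking set $B$. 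If one could prove $h_\epsilon(B)=o(n)$ for every fixed $\epsilon>0$, then the remaining blockers cover fewer than $\epsilon n$ pairs each, giving
\[
|B| \geq \frac{\binom{n}{2}-h_\epsilon(B)(n/2)}{\epsilon n} \geq \left(\frac{1}{2\epsilon} - o(1)\right) n,
\]
and letting $\epsilon\to 0$ slowly with $n$ would yield \conjref{Blockers}.

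The heart of the plan is therefore the rigidity step. An $\epsilon$-heavy blocker $x$ partitions a set of at least $2\epsilon n$ points of $P$ into antipodal couples through $x$, and so plays the role of an approximate centre of $P$. I would attack the number of such near-centres by combining two tools. First, a pairing-overlap argument: two distinct $\epsilon$-heavy blockers $x,y$ cannot share many common antipodal pairs, since any pair lying simultaneously on a line through $x$ and a line through $y$ must lie on the line $xy$, and general position of $P$ forbids more than two points of $P$ on $xy$. Second, an incidence estimate: the $h_\epsilon$ heavy points and the $\binom{n}{2}$ lines determined by $P$ support at least $\epsilon n\, h_\epsilon$ point-line incidences, which one would control from above by Szemer\'edi--Trotter together with the pairing-overlap information.

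The main obstacle is that the Szemer\'edi--Trotter bound $O((h_\epsilon n^2)^{2/3}+h_\epsilon+n^2)$ is essentially saturated at the threshold $h_\epsilon\sim n$, and therefore leaves room for linearly many heavy blockers. Moreover, constructions such as disjoint unions of small regular polygons realise many weakly heavy centres (one per polygon) without driving $|B|$ below the trivial bound, showing that the required rigidity must be genuinely structural rather than merely quantitative. Extracting this extra structure --- perhaps by showing that many $\epsilon$-heavy blockers force $P$ to contain many near-symmetric sub-configurations and then ruling out such an abundance via an algebraic or Ramsey-type obstruction --- is precisely where I expect all the technical difficulty to concentrate, and it is why \conjref{Blockers} has resisted proof despite its elementary statement.
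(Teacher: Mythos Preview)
The statement you are addressing is \emph{Conjecture}~\ref{con:Blockers}, not a theorem; the paper does not prove it, nor does it claim to. The paper only records the best known linear lower bounds $b(n)\geq 2n-3$ (triangulation) and $b(n)\geq(\tfrac{25}{8}-o(1))n$ (Dumitrescu--Pach--T\'oth), together with the sub-$n^{1+\epsilon}$ upper bound coming from midpoints. There is therefore no ``paper's own proof'' to compare your proposal against.

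As for the content of your outline: the double-counting bound $b(n)\geq n-1$ is correct but weaker than what the paper already cites. Your reduction to bounding the number $h_\epsilon$ of $\epsilon$-heavy blockers is a natural strategy, and your observation that two distinct blockers can share at most one blocked pair (since both would have to lie on the unique line through that pair) is fine. However, you yourself correctly identify the genuine gap: the Szemer\'edi--Trotter bound on incidences between $h_\epsilon$ heavy blockers and the $\binom{n}{2}$ lines of $P$ is saturated precisely at $h_\epsilon\sim n$, so it cannot by itself rule out linearly many heavy blockers. Your closing paragraph is an honest acknowledgement that the approach stalls at exactly the threshold it needs to beat, and that further structural input would be required. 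In short, what you have written is a discussion of why the conjecture is hard rather than a proof of it --- which is consistent with the fact that the conjecture is, to the authors' knowledge, open.
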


In fact, \citet{Pinchasi} conjectured that $b(n)\in\Omega(n\log
n)$. Linear lower bounds on $b(n)$ are known \citep{Matousek09,DPT09}.
Let $P$ be a set of $n$ points in the plane in general position with
$t$ vertices on the boundary of the convex hull.  Each edge of a
triangulation of $P$ requires a distinct blocker, and every
triangulation of $P$ has $3n-3-t$ edges.  So every blocking set of $P$
has at least $3n-3-t\geq 2n-3$ vertices, and $b(n)\geq 2n-3$.
\citet{DPT09} improved this bound to $b(n)\geq(\frac{25}{8}-o(1))n$.

%%%%%%%%%%%%%%%%%%%%%%%%%%%%%%%%%%%%%%%%%%%%%%%%%%%%%%%%%%%%%%
\section{Blocking Graph Drawings}
\seclabel{TopoBlockers}
%%%%%%%%%%%%%%%%%%%%%%%%%%%%%%%%%%%%%%%%%%%%%%%%%%%%%%%%%%%%%%

A \emph{drawing} of a graph $G$ represents each vertex of $G$ by a
distinct point in the plane, and represents each edge of $G$ by a
simple closed curve between its endpoints, such that a vertex $v$
intersects an edge $e$ only if $v$ is an endpoint of $e$.  We do not
distinguish between graph elements and their representation in a
drawing. Note that multiple edges may intersect at a common point. A
drawing is \emph{simple} if any two edges intersect at most once, at a
common endpoint or as a proper crossing (``kissing'' edges are not
allowed). A drawing is \emph{geometric} if each edge is a straight
line-segment. Obviously, every geometric drawing is simple.

Blockers for point sets generalise for graph drawings as follows. A
set of points $B$ \emph{blocks} a drawing of a graph $G$ if no vertex
of $G$ is in $B$ and every edge of $G$ contains some point in $B$.
Observe that if $P$ is a set of points in general position, then $B$
blocks $P$ if and only if $B$ blocks the geometric drawing of the
complete graph with vertices drawn at $P$.

Some geometry is needed in \conjref{Blockers}, in the sense that $K_n$
has a simple (non-geometric) drawing that can be blocked by $2n-3$
blockers. As illustrated in \figref{BlockedTopoDrawing7}, if
$V(K_n)=\{v_1,\dots,v_n\}$ then place $v_i$ at $(i,0)$ and draw each
edge $v_iv_j$ with $i<j$ by a curve from $v_i$ into the upper
half-plane, through the point $(-i-j,0)$, into the lower half-plane,
and across to $v_j$. As illustrated in \figref{BlockedTopoDrawing7},
the edges can be drawn so that two edges intersect at most once. Each
edge is blocked by one of the $2n-3$ points in
$\{(-k,0):k\in[3,2n-1]\}$.  This observation improves upon a $O(n\log
n)$ upper bound on the number of blockers in a simple drawing of
$K_n$, due to \citet{DPT09}. A similar construction is due to
\citet{HM92}; see \citet{PST-DCG03}.  Note that at least $n-1$
blockers are needed for every simple drawing of $K_n$ (since each
point can block at most $\frac{n}{2}$ edges).

\begin{conjecture}
The minimum number of blockers in a simple drawing of
$K_n$ equals  $2n-3$.
\end{conjecture}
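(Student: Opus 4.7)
The upper bound of $2n-3$ is supplied by the construction shown in \figref{BlockedTopoDrawing7}, so the content of the conjecture is the matching lower bound: every blocking set of every simple drawing of $K_n$ has size at least $2n-3$. The governing observation is that two edges of a simple drawing which do not cross must have disjoint interiors (a shared interior point would be a crossing), and therefore cannot share a common blocker. Hence, if $F$ is any family of pairwise non-crossing edges in a simple drawing $D$ of $K_n$, every blocking set of $D$ has size at least $|F|$. The conjecture therefore reduces to the following purely topological claim: \emph{every simple drawing of $K_n$ contains $2n-3$ pairwise non-crossing edges}. This is the exact topological analogue of the triangulation fact used in \secref{Blockers}, that any triangulation of $n$ points in general position has at least $3n-3-t \geq 2n-3$ pairwise non-crossing straight edges.

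My plan is to prove the claim by induction on $n$. The base case $n=4$ follows because a simple drawing of $K_4$ has at most $\binom{4}{4}=1$ crossing, so at least $6-1=5=2n-3$ of its six edges are pairwise non-crossing. For the inductive step, one would choose an ``exposed'' vertex $v$---say a vertex incident to the outer face of the planarization of $D$---apply induction to $D-v$ to extract a family $F'$ of $2n-5$ pairwise non-crossing edges among the remaining vertices, and then extend $F'$ by two edges $va$ and $vb$ which cross neither each other nor any edge of $F'$. This produces a family of $2n-3$ pairwise non-crossing edges in $D$ and closes the induction.

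The principal obstacle is guaranteeing the existence of the two extension edges at $v$: they must be selected consistently both with the rotation system of $D$ at $v$ and with the topological placement of $F'$ in a neighbourhood of $v$, and there is no immediate reason why such a pair should always exist. An alternative and equally delicate route is to invoke the conjecture that every simple drawing of $K_n$ contains a plane Hamilton cycle $C$ (due to Rafla); given such a $C$, its interior and exterior each carry smaller simple complete drawings into which one would recursively insert $n-3$ plane chords, yielding $n+(n-3)=2n-3$ pairwise non-crossing edges. Both attacks ultimately depend on plane-substructure properties of simple drawings of $K_n$ which remain only partially understood, and progress on the blocker conjecture is likely to come hand in hand with new structural theorems of this kind.
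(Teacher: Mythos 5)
This statement is one of the paper's open conjectures: the paper offers no proof, only the two ingredients you correctly identify --- the drawing of \figref{BlockedTopoDrawing7}, which supplies the upper bound of $2n-3$, and a counting argument giving the trivial lower bound of $n-1$ (each blocker lies on at most $\frac{n}{2}$ edges). So there is no proof to match, and your proposal, as you yourself concede, does not close the gap either. Your first observation is sound and does improve on the paper's counting bound in spirit: in a simple drawing two non-crossing edges meet in at most a common endpoint, and blockers may not be vertices, so distinct edges of a plane subdrawing require distinct blockers, whence the blocking number of a drawing is at least the number of edges in its maximum plane subdrawing. Note, however, that this makes your claim \emph{sufficient} for the lower bound, not equivalent to it; ``reduces to'' overstates the logic, since a drawing could in principle require many blockers while possessing only a small plane subdrawing.

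The genuine gap is exactly where you locate it, and it is not a technicality: the claim that every simple drawing of $K_n$ contains $2n-3$ pairwise non-crossing edges is itself an open structural problem about simple drawings, of essentially the same order of difficulty as the conjecture it is meant to settle. Your induction founders because nothing forces the exposed vertex $v$ to have two incident edges crossing no edge of $F'$; to justify that step one would need a structure theorem along the lines of ``every maximal plane subdrawing of a simple drawing of $K_n$ is spanning with minimum degree at least $2$'', and that is precisely the missing ingredient, nowhere established at the time of this paper. The Rafla route has a further flaw beyond resting on an open conjecture: a chord joining two vertices of a plane Hamiltonian cycle $C$ need not lie inside or outside $C$ --- it may cross the edges of $C$ not adjacent to it --- so the interior and exterior of $C$ do not ``carry smaller simple complete drawings'', and the recursive insertion of $n-3$ plane chords never gets off the ground. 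Finally, your base case is right in substance but wrong in justification: $\binom{4}{4}=1$ is irrelevant; the correct (true) fact is that every simple drawing of $K_4$ has at most one crossing, which requires a Jordan-curve argument about the triangle on three of the vertices and the three edges to the fourth, not a binomial count.
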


\Figure{BlockedTopoDrawing7}{\includegraphics[width=\textwidth]{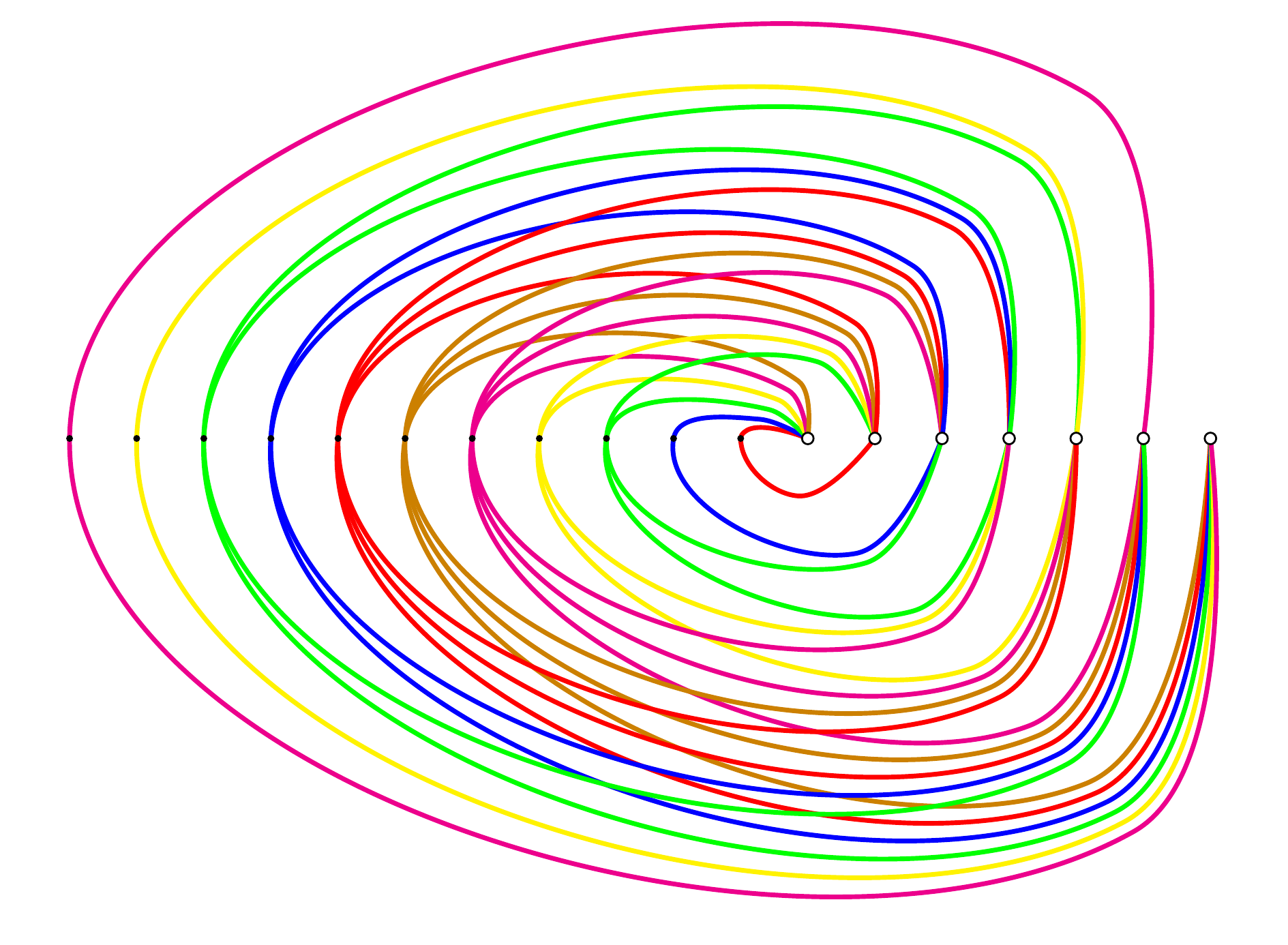}}{A drawing of $K_7$ blocked by $11$ blockers.}

%\comment{DW: Perhaps we should delete the following paragraph since the same idea is  discussed by \citet{Matousek09}.}

While this example suggests that geometry is needed in
\conjref{Blockers}, Stefan Langerman [personal communication, 2009]
proposed an alternative. A drawing of a graph is \emph{extendable} if
the edges are contained in a pseudoline arrangment; that is, for each
edge $e$ there is a simple unbounded curve $C_e$ containing $e$, such
that for all distinct edges $e$ and $e'$, the curves $C_e$ and
$C_{e'}$ intersect at most once. Observe that the above simple drawing
that can be blocked by $O(n)$ blockers is not extendable. We
conjecture that every extendible simple drawing of $K_n$ needs a
super-linear number of blockers.

%%%%%%%%%%%%%%%%%%%%%%%%%%%%%%%%%%%%%%%%%%%%%%%%%%%%%%%%%%%%%%
\section{Midpoints and Freiman's Theorem}
\seclabel{Midpoints}
%%%%%%%%%%%%%%%%%%%%%%%%%%%%%%%%%%%%%%%%%%%%%%%%%%%%%%%%%%%%%%

\conjref{Blockers} is related to results by
\citet{Pach-Midpoints-Geom03} about midpoints. For a set $P$ of points
in the plane, let $m(P)$ be the number of midpoints determined by
distinct points in $P$; that is, $m(P):=|\{\half(x+y):x,y\in P,x\neq
y\}|$. Let $m(n)$ be the minimum of $m(P)$, where $P$ is a set of $n$
points in general position in the plane. Since midpoints are also
blockers, $b(n)\leq m(n)$.  \citet{Pach-Midpoints-Geom03} (and later
\citet{Matousek09}) constructed a set of $n$ points in general
position in the plane that determine at most $nc^{\sqrt{\log n}}$
midpoints for some contant $c$. Thus $$b(n)\leq m(n)\leq
nc^{\sqrt{\log n}}\enspace.$$ (This function is between $n\log n$ and
$n^{1+\epsilon}$.) Moreover, \citet{Pach-Midpoints-Geom03} proved that
$\frac{m(n)}{n}\rightarrow\infty$ as $n\rightarrow\infty$. Thus
\conjref{Blockers} would stregthen this lower bound on $m(n)$.

Pach's proof of this lower bound is based on Freiman's
Theorem\footnote{A $d$-dimensional \emph{progression} in the plane is
  a set $\{v_0+x_1v_1+\dots+x_dv_d:x_i\in[1,n_i]\}$ for
  some vectors $v_0,\dots,v_d\in\mathbb{R}^2$. Freiman's Theorem is
  usually stated in terms of the \emph{sum set}
  $P+P:=\{x+y:x,y\in P\}$. Clearly $m(P)\leq|P+P|\leq m(P)+|P|$.
  Freiman's Theorem actually applies in any abelian group; see
  \citep{TaoVu06}. See
  \citep{Stanchescu98,Stanchescu08,Stanchescu99,Fishburn-JNT90} for
  more on Freiman's Theorem in the plane.}, which implies that if
$m(P)=\alpha n$ for some set $P$ of $n$ points in the plane (not
necessarily in general position), then $P$ is a subset of a
$d$-dimensional progression of size at most $\beta n$, for some $d$
and $\beta$ depending only on $\alpha$. Pach concluded that at least
$\frac{1}{\beta}n^{1/d}$ points in $P$ are collinear. Since no three
points in $P$ are collinear, $n$ is bounded by a function of
$\alpha$. It follows that $\frac{m(n)}{n}\rightarrow\infty$.  We can
obtain a more precise lower bound on $m(n)$ as follows.
\citet{Chang-DMJ02} proved that in Freiman's Theorem one can take
$d=\floor{\alpha-1}$ and $\beta=2^{c \alpha^2\log^3\alpha}$, for some
absolute constant $c>0$. Applying this result in Pach's proof, it
follows that $\alpha^3\log^3\alpha \geq \frac{1}{c}\log n$. Hence for
all $\epsilon>0$ and sufficiently large $n$, for some absolute
constant $c>0$,
\begin{equation}
\eqnlabel{newm}
m(n)\geq c n(\log n)^{1/(3+\epsilon)}\enspace.
\end{equation}
Analogously, the following conjectured `convex combination' version of
Freiman's Theorem would establish \conjref{Blockers}.

\begin{conjecture}
  \conjlabel{GeneralFreiman} 
Let $P$ be a set of points in the plane
  with at most $\half|P|$ points collinear. Suppose that $P$ can be
  blocked by some set $B$ with $|B|\leq\alpha|P|$. That is, for all
  distinct $x,y\in P$ there is a real number $\gamma\in(0,1)$, such
  that $\gamma x+(1-\gamma)y\in B$. Then $P$ is a subset of a
  $d$-dimensional progression of size at most $\beta|P|$,  for some
  $d$ and $\beta$ depending only on $\alpha$.
\end{conjecture}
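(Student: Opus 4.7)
The plan is to mimic Pach's derivation of $m(n)/n\to\infty$: he shows that $m(P)\leq\alpha n$ forces $|P+P|\leq(\alpha+1)n$, invokes Freiman's theorem to place $P$ inside a $d$-dimensional progression of size at most $\beta n$, and then uses the resulting long collinear sub-arrangement to contradict the general-position assumption when $n$ is large. The blocking hypothesis in \conjref{GeneralFreiman} is strictly weaker than the midpoint hypothesis because each blocker $b$ is only \emph{some} convex combination $\gamma x+(1-\gamma)y$ with $\gamma\in(0,1)$ depending on the pair $\{x,y\}$. The goal is therefore to recover enough of the algebraic structure of $P+P$ from the blocker set $B$ to feed a Freiman-type theorem.

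First I would partition $(0,1)$ dyadically into $O(\log|P|)$ intervals and, for each pair $\{x,y\}$, record both its blocker $b\in B$ and the interval $I$ containing the ratio $\gamma$. Pigeonholing on $I$ extracts a scalar $\gamma_0$ and a subset $P_1\subseteq P$ of size $\Omega(|P|/\log|P|)$ such that every pair in $P_1$ is blocked by some element of $B$ with ratio close to $\gamma_0$. Writing $\mu:=(1-\gamma_0)/\gamma_0$ and rescaling by $\gamma_0^{-1}$, this reads as an approximate small-doubling condition $|P_1+\mu P_1|=O(\alpha|P_1|)$. I would then invoke a Pl\"unnecke--Ruzsa inequality applied to the dilate $\mu P_1$, followed by a Balog--Szemer\'edi--Gowers-type regularisation, to convert this approximate structure into a genuine small-doubling condition $|P_2+P_2|\leq K|P_2|$ on a refined subset $P_2\subseteq P_1$. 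Chang's quantitative Freiman theorem then places $P_2$ inside a progression of dimension $d=\floor{K-1}$ and size $2^{cK^2\log^3 K}|P_2|$. Finally, using the ``at most $\half|P|$ collinear'' hypothesis to forbid degenerate behaviour, one iterates the extraction on $P\setminus P_2$ and merges the resulting progressions into a single one of bounded dimension covering all of $P$.

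The main obstacle is that the above approximation is not uniform: two pairs that fall in the same $\gamma$-interval but have very different separations $\|x-y\|$ give blockers that deviate from $\gamma_0 x+(1-\gamma_0)y$ by errors of very different magnitudes, so the set $\gamma_0 P_1+(1-\gamma_0)P_1$ is only controlled up to a set of non-negligible diameter rather than in the genuine sumset sense. There is no off-the-shelf Freiman-type theorem in $\R^2$ for this kind of ``fuzzy'' sumset bound, and bridging this is where a new idea is needed. One remedial route is to further partition $P$ by the scale of $\|x-y\|$, at the cost of additional logarithmic factors in the cardinality; a different route is to bypass discretisation entirely and exploit the blocking geometry directly, for instance by iteratively peeling off a maximal collinear chain, charging its contribution against the blockers it forces, and recursing on the residual set, so that a Freiman theorem is invoked only on the cleaned-up nucleus. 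Either route must also handle irrational $\gamma_0$, for which the rescaling step does not close up inside a lattice, suggesting that the cleanest formulation may take place in projective rather than affine coordinates.
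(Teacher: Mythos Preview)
The statement you are attempting to prove is \conjref{GeneralFreiman}, which the paper presents explicitly as an \emph{open conjecture}; there is no proof in the paper to compare against. The surrounding text says only that this conjecture, if true, would establish \conjref{Blockers} by the same mechanism Pach used (via the classical Freiman theorem) to show $m(n)/n\to\infty$, and that it generalises the planar instance of Freiman's theorem (the case $\gamma=\tfrac12$ for every pair).

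Your proposal is not a proof either, and to your credit you say so: the ``fuzzy sumset'' obstacle you isolate is precisely why the statement remains a conjecture. Let me flag one earlier gap that you pass over too quickly. Pigeonholing the $\binom{|P|}{2}$ pairs into $O(\log|P|)$ dyadic $\gamma$-intervals yields $\Omega(|P|^2/\log|P|)$ \emph{pairs} in a common interval; it does \emph{not} yield a subset $P_1\subseteq P$ of size $\Omega(|P|/\log|P|)$ in which \emph{every} pair is blocked at ratio near $\gamma_0$. Going from a dense edge set to a large subset with the all-pairs property is itself a Balog--Szemer\'edi--Gowers-type step and typically costs a polynomial factor in the density, so the losses begin before you reach the approximate-sumset stage. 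And even after such an extraction, what you obtain is not $|P_1+\mu P_1|=O(\alpha|P_1|)$ but only that for many pairs $(x,y)$ the point $x+\mu y$ lies \emph{near} a small set, with ``near'' measured on a scale proportional to $\|x-y\|$; this multiscale error is exactly what prevents feeding any off-the-shelf Freiman or Pl\"unnecke--Ruzsa statement. Your two suggested remedies (further scale partitioning, or a direct geometric peeling argument) are reasonable research directions, but neither is known to close the gap.

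In short: there is nothing in the paper to compare your attempt to, and your sketch correctly identifies both the natural line of attack and the genuine obstruction that keeps \conjref{GeneralFreiman} open.
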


Note that some assumption on the number of collinear points is needed
in \conjref{GeneralFreiman}. For example, a set of $n$ random
collinear points can be blocked by $n-1$ points, but is not a subset
of a progression of bounded dimension and linear size. This conjecture
generalises Freiman's Theorem for the plane, which assumes
$\alpha=\frac{1}{2}$ for all $x,y\in P$.

While Freiman's Theorem applies in some sense for sum sets along the
edges of any dense graph \citep{ER-JCTA06}, it is worth noting that
there is a geometric drawing of $K_{n,n}$ that can be
blocked by $O(n)$ blockers. Say the colour classes of $K_{n,n}$ are
$\{v_1,\dots,v_n\}$ and $\{w_1,\dots,w_n\}$. Position $v_i$ at
$(2i,0)$, and $w_j$ at $(2j,2)$. Thus $v_iw_j$ is blocked by
$(i+j,1)$, and $\{(i,1):i\in[2,2n]\}$ is a set of $2n-1$ points
blocking every edge. In fact, there is a geometric drawing of
$K_{n,n}$ with its vertices in general position that can be similarly
blocked. Position $v_i$ at $(-2^i,2^{2i})$ and $w_j$ at
$(2^j,2^{2j})$. These points lie on opposite sides of the parabola
$y=x^2$. The edge $v_iw_j$ is blocked by $(0,2^{i+j})$, and
$\{(0,2^i):i\in[2,2n]\}$ is a set of $2n-1$ points blocking every
edge.

In general, say $S=\{s_1,\dots,s_n\}$ is a set of $n$ positive
integers. Draw $K_{n,n}$ by positioning each $v_i$ at $(-s_i,s_i^2)$
and each $w_j$ at $(s_j,s_j^2)$ (again on opposite sides of the
parabola $y=x^2$). Say we block every edge by a point on the
y-axis. The edge $v_iw_j$ crosses the y-axis at $(0,s_is_j)$.  Thus to
have few blockers, $S$ should be chosen so that the product set
$S\cdot S := \{ab :a,b \in S\}$ is small.  Geometric progessions, such
as $2^1,2^2,\dots,2^n$, minimise the size of the product set (leading
to the construction of $K_{n,n}$ above).  It is interesting that both
sum sets (that is, midpoints) and product sets appear to be related to
blocking sets. There is a known trade-off between the sizes of sum
sets and product sets (so-called \emph{sum-product estimates}). In
particular, $|S+S|$ or $|S\cdot S|$ is at least $c|S|^{1+\epsilon}$
for some $c>0$ and $\epsilon>0$; see
\citep{ES83,Solymosi05,Chang03,Chen-PAMS99,Elekes97}. Especially given
that geometric methods based on the Szemer\'edi-Trotter theorem can be
used to prove such a result \citep{Elekes97}, it is plausible that
sum-product estimates might shed some light on \conjref{Blockers}.

%%%%%%%%%%%%%%%%%%%%%%%%%%%%%%%%%%%%%%%%%%%%%%%%%%%%%%%%%%
\section{Colouring Edges}
%%%%%%%%%%%%%%%%%%%%%%%%%%%%%%%%%%%%%%%%%%%%%%%%%%%%%%%%%%

%\comment{DW: Perhaps we should delete from here to the end of the
%section, since all this stuff is discussed by \citet{Matousek09}.}

Now consider edge-colourings of graph drawings, such that if two edges
have the same colour, then they cross. This idea is related to
blockers, since if a graph drawing can be blocked by $b$ blockers,
then it can be coloured with $b$ colours. Let $t(n)$ be the minimum
integer such that the edges in some geometric drawing of $K_n$ can be
coloured with $t(n)$ colours such that every monochromatic pair of
edges cross. Each colour class is called a \emph{crossing family}
\citep{AEGKKPS94}. Hence $t(n)\leq b(n)$. We conjecture the following
strengthening of \conjref{Blockers}.

\begin{conjecture}
$\frac{t(n)}{n}\rightarrow\infty$ as $n\rightarrow\infty$. 
\end{conjecture}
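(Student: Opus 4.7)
The plan is to fix a geometric drawing of $K_n$ on a set $P$ of $n$ points in general position that realizes $t(n)$ colors, and to extract a super-linear lower bound. The starting observation is that each color class must be a straight-line matching, because two crossing straight segments cannot share an endpoint. Consequently every class has at most $\floor{n/2}$ edges, and since the $n-1$ edges incident to any fixed vertex of $P$ are pairwise non-crossing, one immediately obtains $t(n)\ge n-1$. Note that a super-linear bound on $t(n)$ would imply \conjref{Blockers} via $t(n)\le b(n)$, so any successful attack must be strictly stronger than current approaches to blockers.

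The next step is to exploit vertex stars more carefully. If $t(n)=cn$ for some constant $c$, then for the average vertex $p$ the star at $p$ already saturates nearly every color. Sorting the edges at $p$ by angle, I would try to show that two angularly close edges at $p$ cannot belong to the same crossing matching unless that matching is globally very restricted, and then propagate these constraints across vertices to obtain a contradiction. A complementary route is to count crossings: the pairs of monochromatic edges contribute at least $\sum_i \binom{s_i}{2}$ crossings, where $s_1,\dots,s_{t(n)}$ are the color-class sizes. By convexity this sum is minimized when the sizes are equal, so $t(n)=cn$ would force only about $n^3/(2c)$ crossings, which is compatible with the $\Theta(n^4)$ total number of crossings available. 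The goal is to combine this global crossing count with an upper bound on the typical crossing-family size, for instance through a subset of $P$ in convex position located by Erd\H os--Szekeres, to conclude that most color classes must have size $o(n)$ and hence $t(n)=\omega(n)$.

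The main obstacle is that in convex position there do exist crossing matchings of size $n/2$, so edge-counting alone is tight at the linear bound, and the vertex-star argument is only locally tight at any single vertex. Breaking through appears to require a genuinely new structural result on how crossing families can tile $E(K_n)$---perhaps a dichotomy in the spirit of the Freiman-type \conjref{GeneralFreiman} used in \secref{Midpoints} to obtain super-linear lower bounds on midpoints---or a sum-product style rigidity theorem showing that a point set admitting an $O(n)$-coloring of this form must carry strong algebraic structure incompatible with general position. I expect producing such a structural result to be the hardest part of the proof, and probably the reason this strengthening of \conjref{Blockers} remains open.
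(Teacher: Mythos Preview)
The statement you are trying to prove is a \emph{conjecture}, and the paper offers no proof of it; it is simply posed as an open problem (explicitly labelled a strengthening of \conjref{Blockers}). So there is nothing in the paper to compare your attempt against.

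Your write-up is not a proof either, and you say as much in your final paragraph. The preliminary observations are correct: each colour class is a matching, the $n-1$ edges at a fixed vertex force $t(n)\ge n-1$, and the convexity/crossing count only yields a cubic lower bound on monochromatic crossings, which is swamped by the $\Theta(n^4)$ crossings available and hence gives no contradiction with $t(n)=O(n)$. But none of the subsequent plans---propagating angular constraints across stars, invoking Erd\H os--Szekeres to find convex subsets, or appealing to a yet-unproved Freiman-type rigidity---is carried out, and you correctly identify that the decisive structural lemma is missing. In short, you have rediscovered why the conjecture is open rather than proved it; the paper is in exactly the same position.
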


The analogous conjecture could be made for extendible simple drawings
of $K_n$.

%%%%%%%%%%%%%%%%%%%%%%%%%%%%%%%%%%%%%%%%%%%%%%%%%%%%%%%%%%
\section{Convex Position}

For point sets in convex position, the above edge-colouring problem is
equivalent to covering a circle graph by cliques. It follows from a
result by \citet{Kostochka88} (see \citep{KK-DM97}) that the minimum
number of colours is at least $n\ln n-c$ and at most $n\ln n+cn$, for
some constant $c$. Thus the number of blockers for a point set in convex
position is at least $n\log n-c$. We conjecture the answer is quadratic.

\begin{conjecture} Every set of $n$ points in convex position require
$\Omega(n^2)$ blockers.
\end{conjecture}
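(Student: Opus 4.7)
The plan is to separate the argument based on whether the convex configuration is generic. First, suppose the $n$ points $P$ are in \emph{generic} convex position, meaning no three chords with both endpoints in $P$ are concurrent at a common interior point. Then every point of the plane that is not in $P$ lies in the open interior of at most two chords of $P$, since three chords meeting at a common interior point would constitute a forbidden triple concurrence. Consequently, any blocking set $B$ must contain at least $\binom{n}{2}/2 = \Omega(n^2)$ points, matching the conjectured bound.

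The substantial case is non-generic convex position, where a single blocker can lie on up to $\lfloor n/2 \rfloor$ concurrent chords (as at the centre of a regular $n$-gon with $n$ even). My plan is to establish the bound by LP duality: exhibit a fractional matching $y$ on the chords, assigning each chord $c$ a weight $y_c \in [0,1]$ such that $\sum_{c \ni p} y_c \le 1$ for every point $p$, with total weight $\sum_c y_c = \Omega(n^2)$. The generic case corresponds to the uniform weights $y_c = \tfrac{1}{2}$. In the presence of concurrences, chords through points of high multiplicity must be assigned smaller weights, but I would try to compensate by identifying an $\Omega(n^2)$-size family of ``ordinary'' chords whose interior intersections with other chords are all of low multiplicity, and assigning those a constant weight.

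The main obstacle is controlling the interaction between blockers and high-multiplicity concurrences. Proving the existence of enough ordinary chords amounts to a structural statement about the diagonal arrangement of a convex polygon: the set of chords passing exclusively through high-multiplicity concurrences cannot be of quadratic size. Known bounds on the number of interior triple points in the diagonal arrangement (as studied by Poonen and Rubinstein for regular polygons) should contribute, together with the observation that a blocker at a concurrence of $k$ chords records $k$ coincidences $p = \gamma_i v_{a_i} + (1-\gamma_i) v_{b_i}$ in $P$. By the sum--product considerations discussed in \secref{Midpoints}, many such coincidences should place strong additive structure on $P$ that is incompatible with convex position. Converting this heuristic into a quantitative $\Omega(n^2)$ estimate is where the main difficulty lies, and it is plausible that a proof along these lines requires new incidence-geometric tools specific to chord arrangements of convex polygons.
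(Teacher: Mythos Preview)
The statement you are attempting is listed in the paper as an open \emph{conjecture}; the paper offers no proof, so there is nothing to compare your argument against in that sense. What the paper does record is (i) a general lower bound of $n\ln n - c$ coming from Kostochka's clique-cover bound for circle graphs, and (ii) the special case of $n$ equally spaced points on a circle, where Poonen--Rubinstein's bound of at most $7$ diagonals through any non-central interior point yields $\tfrac{n^2}{14}-O(n)$ blockers. Your ``generic'' case is exactly this style of argument pushed to its extreme (multiplicity at most $2$ instead of $7$), and it is correct as stated.

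Your non-generic case, however, is not a proof but a programme, and you say so yourself. The genuine gap is precisely the one the paper flags: it exhibits the convex set $P=\{(\pm 2^i,2^{2i}):i\in[1,n]\}$, for which each point $(0,2^k)$ on the $y$-axis blocks all chords $(-2^i,2^{2i})(2^j,2^{2j})$ with $i+j=k$. Thus $\Omega(n)$ blockers each cover $\Omega(n)$ chords, so the ``high-multiplicity concurrences are rare'' heuristic fails outright, and any fractional-matching argument must assign weight $O(1/n)$ to a constant fraction of the chords in this example. Your appeal to sum--product estimates to rule out such structure is suggestive but does not yet yield a bound: the parabola example shows that convex position is perfectly compatible with a quadratic number of chords passing through a linear number of points. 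Until you can show that the \emph{remaining} chords (those avoiding all heavy concurrences) still number $\Omega(n^2)$, or otherwise account for the blocked chords, the argument does not close. As things stand, the conjecture is open and the best general lower bound remains $n\ln n - O(1)$.
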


For $n$ equally spaced points around a circle, at least
$\frac{n^2}{14}-O(n)$ blockers are required, since except for the
point in the centre, at most $7$ edges intersect at a common interior
point \citep{PooRub-SJDM98}. This property does not hold for arbitrary
points in convex position, since as described in \secref{Midpoints},
for the point set $P=\{(-2^i,2^{2i}),(2^i,2^{2i}):i\in[1, n]\}$, the
point $(0,2^k)$ blocks each edge $(-2^i,2^{2i})(2^j,2^{2j})$ for which
$k=i+j$. Thus $\Omega(n)$ points on the y-axis each block $\Omega(n)$
edges.

Note that \citet{EFF-SJDM91} proved that the minimum number of
midpoints for a set of $n$ points in convex position is between
$0.8\binom{n}{2}$ and $0.9\binom{n}{2}$. %Also see \citet{Balint95}.

%%%%%%%%%%%%%%%%%%%%%%%%%%%%%%%%%%%%%%%%%%%%%%%%%%%%%%%%%%%%%%
\section{Point Sets with Bounded Collinearities}
\seclabel{BoundedCollinearities}
%%%%%%%%%%%%%%%%%%%%%%%%%%%%%%%%%%%%%%%%%%%%%%%%%%%%%%%%%%%%%%

Now consider midpoints and blocking sets for point sets with
a bounded number of collinear points.  Let $m_\ell(n)$ be the minimum
number of midpoints determined by some set of $n$ points in the plane
with no $\ell$ collinear points. Thus $m_3(n)=m(n)$.  The proof of the
lower bound on $m(n)$ described in \secref{Midpoints} generalises to show that for all
$\epsilon>0$ and sufficiently large $n>n(\epsilon)$, for some absolute
constant $c$, $$m_\ell(n)\geq \frac{c}{\log\ell}\, n(\log n)^{1/(3+\epsilon)}\enspace.$$
Similarly, let $b_\ell(n)$ be the minimum integer such that every set
of $n$ points in the plane with no $\ell$ collinear points is blocked
by some set of $b_\ell(n)$ points. Thus $b_3(n)=b(n)$. We conjecture that
$b_\ell(n)$ is also super-linear in $n$ for fixed $\ell$.

\begin{conjecture}
  \conjlabel{GeneralBlockers} For all fixed $\ell$, we have
  $\frac{b_\ell(n)}{n}\rightarrow\infty$ as $n\rightarrow\infty$.
\end{conjecture}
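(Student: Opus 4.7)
The natural plan is to deduce \conjref{GeneralBlockers} from the convex-combination Freiman conjecture \conjref{GeneralFreiman} by exactly the same pigeonhole argument that the authors sketch for \conjref{Blockers}, carrying the parameter $\ell$ through. Since the midpoint analogue $m_\ell(n)/n\to\infty$ admits a proof along these lines (via Chang's quantitative Freiman theorem as indicated in \secref{BoundedCollinearities}), one expects the blocker version to go through once the convex-combination Freiman conjecture is granted.

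In detail, suppose for contradiction that there is a constant $\alpha>0$ such that for arbitrarily large $n$ some set $P$ with $|P|=n$ and no $\ell$ collinear points admits a blocker $B$ with $|B|\le\alpha n$. For $n>2\ell$ the collinearity hypothesis of \conjref{GeneralFreiman} holds, so that conjecture supplies constants $d=d(\alpha)$ and $\beta=\beta(\alpha)$ together with a $d$-dimensional progression $A=\{v_0+x_1v_1+\cdots+x_dv_d:x_i\in[1,n_i]\}$ satisfying $P\subseteq A$ and $|A|\le\beta n$. After replacing $A$ by a proper progression of no larger dimension, $|A|=\prod_i n_i$. For each $i$ the lines of $A$ in direction $v_i$ partition $A$ into $|A|/n_i$ blocks of $n_i$ points that are genuinely collinear in the plane. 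Averaging the $n$ points of $P$ over these blocks, some such line carries at least $n\cdot n_i/|A|\ge n_i/\beta$ points of $P$. Choosing $i$ to maximise $n_i$ gives $n_i\ge n^{1/d}$, hence at least $n^{1/d}/\beta$ collinear points of $P$; once $n\ge(\beta\ell)^d$ this is at least $\ell$, contradicting the hypothesis on $P$. Therefore $b_\ell(n)>\alpha n$ for all $n\ge(\beta(\alpha)\,\ell)^{d(\alpha)}$, and since $\alpha$ was arbitrary, $b_\ell(n)/n\to\infty$.

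The main obstacle is \conjref{GeneralFreiman} itself, which is open even in the special case $\ell=3$ needed for \conjref{Blockers}. A more realistic intermediate goal would be a quantitative convex-combination analogue of \citet{Chang-DMJ02}'s refinement of Freiman's theorem, for instance with $d=\floor{\alpha-1}$ and $\beta=2^{c\alpha^2\log^3\alpha}$; inserting such bounds into the argument above would yield a quantitative version of \conjref{GeneralBlockers} of the form $b_\ell(n)\ge\tfrac{c}{\log\ell}\,n(\log n)^{1/(3+\epsilon)}$, matching the $m_\ell(n)$ bound in \secref{BoundedCollinearities} and extending \eqnref{newm}. An approach that bypasses Freiman altogether would need a substitute for the additive structure that midpoints provide but that general blockers $\gamma x+(1-\gamma)y$ with $\gamma\in(0,1)$ free do not, which seems to demand genuinely new additive-combinatorial input.
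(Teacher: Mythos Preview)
The statement you are addressing, \conjref{GeneralBlockers}, is an open conjecture in the paper; the paper offers no proof of it, so there is no ``paper's own proof'' to compare against. What the paper does say is only that \conjref{GeneralFreiman} would establish the special case $\ell=3$ (i.e.\ \conjref{Blockers}), via the Pach--Freiman pigeonhole argument you have written out.

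Your proposal is a correct extension of that remark to general $\ell$: granted \conjref{GeneralFreiman}, your steps (passing to a proper progression, pigeonholing over the $|A|/n_i$ fibres in direction $v_i$, choosing $i$ to maximise $n_i\geq n^{1/d}$) do force at least $n^{1/d}/\beta$ collinear points of $P$, contradicting the no-$\ell$-collinear hypothesis once $n\geq(\beta\ell)^d$. You are also explicit that this is only a reduction and that \conjref{GeneralFreiman} is itself open, even for $\ell=3$. So what you have written is an honest conditional argument, not a proof of \conjref{GeneralBlockers}; it is consistent with the paper's own stance, which likewise leaves \conjref{GeneralBlockers} unproved and merely uses it as a hypothesis in \propref{GeneralBlockersImpliesBigChromaticNumber}. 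In short: nothing is wrong with your reasoning, but neither you nor the paper has a proof of the statement in question.
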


\begin{proposition}
  \proplabel{GeneralBlockersImpliesBigChromaticNumber}
  \conjref{GeneralBlockers} implies \conjref{BigChromaticNumber}.
\end{proposition}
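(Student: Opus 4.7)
\bigskip

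\noindent\textbf{Proof proposal.} The plan is to contrapose: assume a $(k-1)$-colouring of $P$ in which no two visible points receive the same colour and $P$ contains no $\ell$ collinear points, and use pigeonhole to produce a large colour class that is blocked by a linear-sized set, contradicting \conjref{GeneralBlockers} for $n$ sufficiently large.

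First I would pick a largest colour class $C$; by pigeonhole $m:=|C|\geq n/(k-1)$. The key observation is that $P\setminus C$ blocks $C$. Indeed, for any two points $v,w\in C$, since $v$ and $w$ receive the same colour they cannot be visible in $\V(P)$, so some point of $P$ lies in the open segment $\overline{vw}$; taking $x\in P\cap\overline{vw}$ to be the point closest to $v$ along that segment, the pair $v,x$ is visible in $\V(P)$, so $x\notin C$ (otherwise we would have a monochromatic visible pair). Hence $x\in P\setminus C$ blocks $v$ and $w$.

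Next, $C$ itself contains no $\ell$ collinear points (being a subset of $P$), so $b_\ell(m)\leq |P\setminus C|=n-m$. Combining this with $m\geq n/(k-1)$ gives
\[
\frac{b_\ell(m)}{m}\leq\frac{n-m}{m}\leq(k-1)-1=k-2.
\]
So assuming \conjref{GeneralBlockers}, which asserts $b_\ell(m)/m\to\infty$, there exists $M=M(k,\ell)$ such that any $m\geq M$ satisfies $b_\ell(m)/m>k-2$. Therefore $m<M$, whence $n\leq(k-1)m<(k-1)M$. Taking $n\geq(k-1)M$ in \conjref{BigChromaticNumber} yields the desired contradiction.

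The argument is essentially pigeonhole plus the closest-blocker trick, and I do not anticipate a genuine obstacle: the only subtle point is ensuring the blocker of a monochromatic pair can be chosen outside $C$, which is handled by choosing the closest in-segment point and invoking the visibility-respecting colouring.
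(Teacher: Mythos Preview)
Your proof is correct and follows essentially the same pigeonhole-on-colour-classes argument as the paper's: take the largest colour class, observe that its complement blocks it, and derive a linear upper bound on $b_\ell$ of that class contradicting \conjref{GeneralBlockers}. Your explicit ``closest-blocker'' justification that $P\setminus C$ blocks $C$ is a detail the paper simply asserts without argument, and your bookkeeping with $k-1$ colours (yielding the bound $k-2$) is slightly tighter than the paper's, which works with $k$ colours and obtains $k-1$; neither difference is material.
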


\begin{proof} 
  Suppose on the contrary that \conjref{GeneralBlockers} holds but
  \conjref{BigChromaticNumber} does not.  Thus there are constants
  $\ell$ and $k$, and there are arbitrarily large point sets $P$
  containing no $\ell$ collinear points, and with $\chi(\V(P))\leq k$.
  \conjref{GeneralBlockers} implies that $b_{\ell}(n)\geq n\cdot
  g_{\ell}(n)$ for some non-decreasing function $g_{\ell}$ for which
  $g_{\ell}(n)\rightarrow\infty$ as $n\rightarrow\infty$.  Thus there
  is an integer $n'$ such that $g_\ell(n')>k-1$.  Let $P$ be a set of
  $n\geq kn'$ points, containing no $\ell$ collinear points, and with
  $\chi(\V(P))\leq k$.  Let $S$ be the largest colour class in a
  $k$-colouring of $\V(P)$.  Thus $S$ has no $\ell$ collinear points
  and $P-S$ blocks $S$.  That is, there is a set of
  $s=\ceil{\frac{n}{k}}$ points blocked by a set of $n-s$ points.
  Thus $b_{\ell}(s)\leq n-s\leq n(1-\frac{1}{k})$. On the other hand,
  $b_{\ell}(s)\geq s\cdot g_{\ell}(s)\geq\frac{n}{k}\cdot
  g_{\ell}(s)$.  Hence $\frac{n}{k}\cdot g_{\ell}(s)\leq
  n(1-\frac{1}{k})$ and $g_{\ell}(s)\leq k-1$.  Since $n'\leq s$ and
  $g$ is non-decreasing, $g_{\ell}(n')\leq k-1$, which is the desired
  contradiction.
\end{proof}

%%%%%%%%%%%%%%%%%%%%%%%%%%%%%%%%%%%%%%%%%%%%%%%%%%
\section{A Final Conjecture}
%%%%%%%%%%%%%%%%%%%%%%%%%%%%%%%%%%%%%%%%%%%%%%%%%%

We finish the paper with a strengthening of
\conjref{BigChromaticNumber}. 

\begin{conjecture}
\conjlabel{MonoLines}
For all positive integers $k$ and $\ell$ there is an integer $n$ such that if $P$ is a set of at least $n$ points in the plane, and each point in $P$ is assigned one of $k$ colours, then:
\begin{itemize}
\item $P$ contains $\ell$ collinear points, or
\item $P$ contains  a monochromatic line (that is, a maximal set of collinear points, all receiving the same colour). 
\end{itemize}
\end{conjecture}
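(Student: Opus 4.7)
The plan is to derive the conjecture from \conjref{GeneralBlockers}, in the same spirit as \propref{GeneralBlockersImpliesBigChromaticNumber} but with an additional geometric step. Fix $k$ and $\ell$, and choose $n$ sufficiently large in terms of the blocking function $b_\ell$. Let $P$ be $k$-coloured with $|P|\geq n$, assume $P$ has no $\ell$ collinear points, and suppose for contradiction that no line of $P$ is monochromatic. Let $C$ be the largest colour class, so that $|C|\geq n/k$ and $C$ has no $\ell$ collinear points.

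The immediate structural consequence of the assumption is that, for every maximal collinear $P$-set $L\cap P$ with $|L\cap C|\geq 2$, the line $L$ contains at least one point of $P\setminus C$; if not, $L\cap P\subseteq C$ and the line would be monochromatic. In other words, $P\setminus C$ is a \emph{line-transversal} for $C$. Unlike in the proof of \propref{GeneralBlockersImpliesBigChromaticNumber}, the transversal point on such a line need not lie in the open segment between a given pair of $C$-points -- it may sit on the part of $L$ that extends past one of them. The central task is therefore to \emph{upgrade} this transversal to an honest blocking set: extract a subset $C'\subseteq C$ with $|C'|=\Omega(n/k)$ and a set $B\subseteq P\setminus C$ meeting the open segment $\overline{xy}$ for every pair $x,y\in C'$. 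Once the upgrade is in hand, \conjref{GeneralBlockers} applied to $C'$ gives $|B|\geq |C'|\cdot g_\ell(|C'|)$ with $g_\ell\to\infty$, while $|B|\leq (1-1/k)n$, yielding the desired contradiction for large~$n$.

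The main obstacle is the upgrade step. Natural attempts -- restricting $C$ to a sufficiently ``interior'' subset (for instance, after peeling off enough outer convex layers) in the hope of forcing witnesses into the segments, or iteratively pruning those $C$-points whose transversal witnesses are all external -- both run into trouble, because a single non-$C$ point sitting at one end of a long maximal line can serve as the only witness for many $C$-pairs on that line, and one can engineer configurations where a large fraction of such pairs are ``bad'' in the segment-blocking sense. Fall-back strategies include proving a stronger line-hitting analogue of \conjref{GeneralBlockers} that asserts super-linearity for the weaker transversal task; or invoking a partition-regularity theorem in the spirit of Gallai--Rado to produce a long monochromatic arithmetic progression and then using the no-$\ell$-collinear hypothesis to promote it to a maximal monochromatic line. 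In any case, the geometric bridge from line-transversal to segment-blocking is where I expect the proof to stand or fall.
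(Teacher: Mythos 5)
The statement you are trying to prove is \conjref{MonoLines}, which the paper poses as an \emph{open conjecture}: the authors give no proof, only the trivial cases ($k=1$ with $n=2$; $\ell=3$ with $n=k+1$), the Motzkin--Rabin theorem for $k=2$, and a pointer to the Hales--Jewett theorem. So there is no proof in the paper to match, and any complete argument would be a solution to an open problem. Your attempt, to your credit, is honest that it is not complete --- and the gap you flag is exactly the real one. Under the no-monochromatic-line hypothesis, the largest colour class $C$ is \emph{not} an independent set in $\V(P)$: the off-colour witness on a line through two $C$-points may lie beyond both of them, so the two $C$-points remain mutually visible and $P\setminus C$ need not block $C$ at all. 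This is precisely the extra strength separating \conjref{MonoLines} from \conjref{BigChromaticNumber}, where the proper-colouring hypothesis hands you segment-blocking for free and \propref{GeneralBlockersImpliesBigChromaticNumber} goes through. Your hoped-for ``upgrade'' from line-transversal to segment-blocking cannot be a density extraction: consider configurations in which every maximal line carries exactly three points of $P$, two of colour $1$ with the colour-$2$ point outside their segment; no positive-fraction subset $C'\subseteq C$ recovers a blocked pair structure, so the reduction to \conjref{GeneralBlockers} collapses, just as you suspected.

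Two further cautions. First, even if the upgrade step were repaired, you would prove only the conditional implication ``\conjref{GeneralBlockers} implies \conjref{MonoLines}'' --- a genuinely interesting strengthening of \propref{GeneralBlockersImpliesBigChromaticNumber} that would be worth writing up on its own, but not a proof of the conjecture, since \conjref{GeneralBlockers} is itself open. Second, the Gallai--Rado fallback does not work as stated: partition-regularity theorems apply to colourings of grids or of all of $\Z^d$, not to arbitrary finite point sets; a monochromatic arithmetic progression of length $\ell$ would simply trigger the first alternative ($\ell$ collinear points) rather than need promoting; and a monochromatic collinear subset is not a monochromatic line in the paper's sense, because maximality requires that \emph{every} point of $P$ on that line share the colour, which nothing in the fallback controls.
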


\conjref{MonoLines} is trivially true for $k=1$ and $n=2$, or $\ell=3$
and $n=k+1$. The Motzkin-Rabin Theorem says that it is true for $k=2$
with $n=\ell$; see
\citep{Motzkin67,PS-AMM04,BorMos90}. \conjref{MonoLines} is related to
the Hales-Jewett Theorem \citep{Shelah-JAMS88,DHJ,HalesJewett-TAMS63},
which states that for sufficiently large $d$, every $k$-colouring of
the grid $[1,\ell-1]^d$ contains a monochromatic ``combinatorial'' line
of length $\ell-1$.

%vW-JCD07,
%Matet-EuJC07,

\section*{Acknowledgements} This research was initiated in 2004 at the
Department of Applied Mathematics of Charles University in
Prague. Thanks to Jaroslav Ne{\v{s}}et{\v{r}}il, Jan Kratochv{\'\i}l
and Pavel Valtr for their generous hospitality. The second author
thanks the numerous people with whom he has had stimulating
discussions regarding the contents of this paper.

%%%%%%%%%%%%%%%%%%%%%%%%%%%%%%%%%%%%%%%%%%%%%%%%%%%%%%%%%%%%%%
%\bibliographystyle{myNatbibStyle}
%\bibliography{myBibliography,myConferences}

\def\cprime{$'$} \def\soft#1{\leavevmode\setbox0=\hbox{h}\dimen7=\ht0\advance
  \dimen7 by-1ex\relax\if t#1\relax\rlap{\raise.6\dimen7
  \hbox{\kern.3ex\char'47}}#1\relax\else\if T#1\relax
  \rlap{\raise.5\dimen7\hbox{\kern1.3ex\char'47}}#1\relax \else\if
  d#1\relax\rlap{\raise.5\dimen7\hbox{\kern.9ex \char'47}}#1\relax\else\if
  D#1\relax\rlap{\raise.5\dimen7 \hbox{\kern1.4ex\char'47}}#1\relax\else\if
  l#1\relax \rlap{\raise.5\dimen7\hbox{\kern.4ex\char'47}}#1\relax \else\if
  L#1\relax\rlap{\raise.5\dimen7\hbox{\kern.7ex
  \char'47}}#1\relax\else\message{accent \string\soft \space #1 not
  defined!}#1\relax\fi\fi\fi\fi\fi\fi} \def\Dbar{\leavevmode\lower.6ex\hbox to
  0pt{\hskip-.23ex\accent"16\hss}D}

\end{document}